\documentclass{amsart}
\usepackage{amssymb}
\usepackage{amscd}
\usepackage{verbatim}
\usepackage{epsfig}
%\input xy
%\xyoption{all}

\begin{document}
\newcommand\Mand{\ \text{and}\ }
\newcommand\Mor{\ \text{or}\ }
\newcommand\Mfor{\ \text{for}\ }
\newcommand\Real{\mathbb{R}}
\newcommand\RR{\mathbb{R}}
\newcommand\im{\operatorname{Im}}
\newcommand\re{\operatorname{Re}}
\newcommand\sign{\operatorname{sign}}
\newcommand\sphere{\mathbb{S}}
\newcommand\BB{\mathbb{B}}
\newcommand\HH{\mathbb{H}}
\newcommand\dS{\mathrm{dS}}
\newcommand\ZZ{\mathbb{Z}}
\newcommand\codim{\operatorname{codim}}
\newcommand\Sym{\operatorname{Sym}}
\newcommand\End{\operatorname{End}}
\newcommand\Span{\operatorname{span}}
\newcommand\Ran{\operatorname{Ran}}
\newcommand\ep{\epsilon}
\newcommand\Cinf{\cC^\infty}
\newcommand\dCinf{\dot \cC^\infty}
\newcommand\CI{\cC^\infty}
\newcommand\dCI{\dot \cC^\infty}
\newcommand\Cx{\mathbb{C}}
\newcommand\Nat{\mathbb{N}}
\newcommand\dist{\cC^{-\infty}}
\newcommand\ddist{\dot \cC^{-\infty}}
\newcommand\pa{\partial}
\newcommand\Card{\mathrm{Card}}
\renewcommand\Box{{\square}}
\newcommand\Ell{\mathrm{Ell}}
\newcommand\WF{\mathrm{WF}}
\newcommand\WFh{\mathrm{WF}_\semi}
\newcommand\WFb{\mathrm{WF}_\bl}
\newcommand\WFsc{\mathrm{WF}_\scl}
\newcommand\Vf{\mathcal{V}}
\newcommand\Vb{\mathcal{V}_\bl}
\newcommand\Vsc{\mathcal{V}_\scl}
\newcommand\Vz{\mathcal{V}_0}
\newcommand\Hb{H_{\bl}}
\newcommand\Hsc{H_{\scl}}
\newcommand\Ker{\mathrm{Ker}}
\newcommand\Range{\mathrm{Ran}}
\newcommand\Hom{\mathrm{Hom}}
\newcommand\Id{\mathrm{Id}}
\newcommand\sgn{\operatorname{sgn}}
\newcommand\ff{\mathrm{ff}}
\newcommand\tf{\mathrm{tf}}
\newcommand\esssupp{\operatorname{esssupp}}
\newcommand\supp{\operatorname{supp}}
\newcommand\vol{\mathrm{vol}}
\newcommand\Diff{\mathrm{Diff}}
\newcommand\Diffd{\mathrm{Diff}_{\dagger}}
\newcommand\Diffs{\mathrm{Diff}_{\sharp}}
\newcommand\Diffb{\mathrm{Diff}_\bl}
\newcommand\Diffsc{\mathrm{Diff}_\scl}
\newcommand\DiffbI{\mathrm{Diff}_{\bl,I}}
\newcommand\Diffbeven{\mathrm{Diff}_{\bl,\even}}
\newcommand\Diffz{\mathrm{Diff}_0}
\newcommand\Psih{\Psi_{\semi}}
\newcommand\Psihcl{\Psi_{\semi,\cl}}
\newcommand\Psisc{\Psi_\scl}
\newcommand\Psiscc{\Psi_\sccl}
\newcommand\Psib{\Psi_\bl}
\newcommand\Psibc{\Psi_{\mathrm{bc}}}
\newcommand\TbC{{}^{\bl,\Cx} T}
\newcommand\Tb{{}^{\bl} T}
\newcommand\Sb{{}^{\bl} S}
\newcommand\Tsc{{}^{\scl} T}
\newcommand\Ssc{{}^{\scl} S}
\newcommand\Lambdab{{}^{\bl} \Lambda}
\newcommand\zT{{}^{0} T}
\newcommand\Tz{{}^{0} T}
\newcommand\zS{{}^{0} S}
\newcommand\dom{\mathcal{D}}
\newcommand\cA{\mathcal{A}}
\newcommand\cB{\mathcal{B}}
\newcommand\cE{\mathcal{E}}
\newcommand\cG{\mathcal{G}}
\newcommand\cH{\mathcal{H}}
\newcommand\cU{\mathcal{U}}
\newcommand\cO{\mathcal{O}}
\newcommand\cF{\mathcal{F}}
\newcommand\cM{\mathcal{M}}
\newcommand\cQ{\mathcal{Q}}
\newcommand\cR{\mathcal{R}}
\newcommand\cI{\mathcal{I}}
\newcommand\cL{\mathcal{L}}
\newcommand\cK{\mathcal{K}}
\newcommand\cC{\mathcal{C}}
\newcommand\cX{\mathcal{X}}
\newcommand\cY{\mathcal{Y}}
\newcommand\cP{\mathcal{P}}
\newcommand\cS{\mathcal{S}}
\newcommand\cZ{\mathcal{Z}}
\newcommand\cW{\mathcal{W}}
\newcommand\Ptil{\tilde P}
\newcommand\ptil{\tilde p}
\newcommand\chit{\tilde \chi}
\newcommand\yt{\tilde y}
\newcommand\zetat{\tilde \zeta}
\newcommand\xit{\tilde \xi}
\newcommand\taut{{\tilde \tau}}
\newcommand\phit{{\tilde \phi}}
\newcommand\mut{{\tilde \mu}}
\newcommand\sigmat{{\tilde \sigma}}
\newcommand\sigmah{\hat\sigma}
\newcommand\zetah{\hat\zeta}
\newcommand\etah{\hat\eta}
\newcommand\loc{\mathrm{loc}}
\newcommand\compl{\mathrm{comp}}
\newcommand\reg{\mathrm{reg}}
\newcommand\GBB{\textsf{GBB}}
\newcommand\GBBsp{\textsf{GBB}\ }
\newcommand\bl{{\mathrm b}}
\newcommand\scl{{\mathrm{sc}}}
\newcommand\sccl{{\mathrm{scc}}}
\newcommand{\sH}{\mathsf{H}}
\newcommand{\cte}{\digamma}
\newcommand\cl{\operatorname{cl}}
\newcommand\hsf{\mathcal{S}}
\newcommand\Div{\operatorname{div}}
\newcommand\hilbert{\mathfrak{X}}
\newcommand\smooth{\mathcal{J}}
\newcommand\decay{\ell}
\newcommand\symb{j}

\newcommand\Hh{H_{\semi}}

\newcommand\bM{\bar M}
\newcommand\Xext{X_{-\delta_0}}

\newcommand\xib{{\underline{\xi}}}
\newcommand\etab{{\underline{\eta}}}
\newcommand\zetab{{\underline{\zeta}}}

\newcommand\xibh{{\underline{\hat \xi}}}
\newcommand\etabh{{\underline{\hat \eta}}}
\newcommand\zetabh{{\underline{\hat \zeta}}}

\newcommand\zn{z}
\newcommand\sigman{\sigma}
\newcommand\psit{\tilde\psi}
\newcommand\rhot{{\tilde\rho}}

\newcommand\hM{\hat M}

\newcommand\Op{\operatorname{Op}}
\newcommand\Oph{\operatorname{Op_{\semi}}}

\newcommand\innr{{\mathrm{inner}}}
\newcommand\outr{{\mathrm{outer}}}
\newcommand\full{{\mathrm{full}}}
\newcommand\semi{\hbar}

\newcommand\Feynman{\mathrm{Feynman}}
\newcommand\future{\mathrm{future}}
\newcommand\past{\mathrm{past}}

\newcommand\elliptic{\mathrm{ell}}
\newcommand\diffordgen{k}
\newcommand\difford{2}
\newcommand\diffordm{1}
\newcommand\diffordmpar{1}
\newcommand\even{\mathrm{even}}
\newcommand\dimn{n}
\newcommand\dimnpar{n}
\newcommand\dimnm{n-1}
\newcommand\dimnp{n+1}
\newcommand\dimnppar{(n+1)}
\newcommand\dimnppp{n+3}
\newcommand\dimnppppar{n+3}

\newcommand\sob{s}

\newtheorem{lemma}{Lemma}%[section]
\newtheorem{prop}[lemma]{Proposition}
\newtheorem{thm}[lemma]{Theorem}
\newtheorem{cor}[lemma]{Corollary}
\newtheorem{result}[lemma]{Result}
\newtheorem*{thm*}{Theorem}
\newtheorem*{prop*}{Proposition}
\newtheorem*{cor*}{Corollary}
\newtheorem*{conj*}{Conjecture}
%\numberwithin{equation}{section}
\theoremstyle{remark}
\newtheorem{rem}[lemma]{Remark}
\newtheorem*{rem*}{Remark}
\theoremstyle{definition}
\newtheorem{Def}[lemma]{Definition}
\newtheorem*{Def*}{Definition}

\newcommand{\mar}[1]{{\marginpar{\sffamily{\scriptsize #1}}}}
\newcommand\av[1]{\mar{AV:#1}}

\renewcommand{\theenumi}{\roman{enumi}}
\renewcommand{\labelenumi}{(\theenumi)}

\title[Essential self-adjointness of the wave operator]{Essential self-adjointness of the wave operator and the
  limiting absorption principle on Lorentzian scattering spaces}
\author[Andras Vasy]{Andr\'as Vasy}
\address{Department of Mathematics, Stanford University, CA 94305-2125, USA}

\email{andras@math.stanford.edu}

\date{December 27, 2017}

\subjclass[2000]{Primary 35L05, 35P05; Secondary 58J47, 58J50}

\thanks{The author is very grateful to Jan Derezi{\'n}ski for
  suggesting this line of investigation and for very interesting
  discussions and comments on an earlier version of the manuscript. He is also grateful to Shu Nakamura, Kouichi Taira and
  Micha{\l} Wrochna for stimulating discussions on this matter, and
  also to Micha{\l} Wrochna for a careful reading of an earlier
  version of the manuscript. The
  author thanks the NSF for partial support under grant number
  DMS-1361432, the Simons Foundation for partial support via a Simons
  fellowship grant, and the Kyoto University RIMS for support during
  the conference at which the final conceptual ingredients of this project were completed.}

\begin{abstract}
We discuss the essential self-adjointness of wave operators, as well as the limiting absorption principle, in
generalizations of asymptotically
Minkowski settings. This
is obtained via using a Fredholm framework for inverting the spectral
family first, and then refining its conclusions to show dense range of
$\Box-\lambda$, $\lambda\notin\RR$, in $L^2_\scl$ when acting on an appropriate subdomain.
\end{abstract}

\maketitle

\section{Introduction}
In this short note we discuss the self-adjointness of the wave
operator on generalizations of Minkowski space, answering a question
of Jan Derezi\'nski. More precisely, the setting is that of
non-trapping sc-metrics, an extension of Lorentzian scattering metrics introduced in
\cite{Baskin-Vasy-Wunsch:Radiation} and studied in more detail in
\cite{Hintz-Vasy:Semilinear},
\cite{Gell-Redman-Haber-Vasy:Feynman} and \cite{Vasy-Wrochna:QFT}, with the Feynman propagator,
whose role is discussed below,
being particularly closely examined in the latter two papers. These are
Lorentzian analogues of the Riemannian scattering metrics introduced
by Melrose in \cite{RBMSpec}. The non-trapping condition is a
condition on null-geodesics on $M$, namely they should converge to
a replacement of the light
cone at infinity in both the forward and backward directions. In fact, the signature of the metric makes no difference; the same  
conclusion is true for non-trapping semi-Riemannian metrics of any  
signature, as the proof goes through without any changes.
Later on, we also discuss the limiting absorption principle, for which
there is also a `non-trapping at
energy $\lambda$' condition which is a condition on the
limiting geodesics at infinity corresponding to the spectral parameter
being considered, see Section~\ref{sec:background} for detail.

For the statement of the first result recall
that for a (densely defined) unbounded operator $L$ self-adjointness is a symmetry {\em
  plus} an invertibility (for the operator $L\pm\iota$), or
equivalently surjectivity statement; essential self-adjointness
amounts to a symmetry plus a dense range statement for the operator $L\pm\iota$.

\begin{thm}\label{thm:ess-sa}
Suppose $(M,g)$ is a non-trapping Lorentzian sc-metric. Then $\Box_g$ is
essentially self-adjoint on $\CI_c(M^\circ)$.
\end{thm}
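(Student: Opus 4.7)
My plan is to establish essential self-adjointness via the standard symmetric-operator criterion: density of the ranges of $\Box_g\mp i$ on $\CI_c(M^\circ)$ in $L^2_\scl(M)$. Since $\Box_g$ is formally self-adjoint, this reduces to the distributional uniqueness statement that every $u\in L^2_\scl(M)$ with
\begin{equation*}
(\Box_g-\lambda)u=0 \text{ in } \cC^{-\infty}(M^\circ), \quad \lambda\in\Cx\setminus\RR,
\end{equation*}
must vanish, which I would prove for $\lambda=\pm i$.

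The main input is the Fredholm framework for $\Box_g-\lambda$ on non-trapping Lorentzian sc-metrics from \cite{Gell-Redman-Haber-Vasy:Feynman, Vasy-Wrochna:QFT}: for each $\lambda\in\Cx$ this framework supplies variable-order scattering Sobolev spaces $\cX=\cX(\lambda)$, whose regularity order lies above the radial-point threshold at one pair of radial sets and below it at the opposite pair (the orientation, Feynman or anti-Feynman, dictated by $\sgn\im\lambda$), and on which $\Box_g-\lambda:\cX\to L^2_\scl$ is Fredholm of index zero. For $\lambda\notin\RR$ the elementary pairing identity $\im\langle (\Box_g-\lambda)u,u\rangle_{L^2_\scl}=-\im\lambda\,\|u\|^2_{L^2_\scl}$, valid on $\cX$, forces the kernel to be trivial; applied to $\bar\lambda$ on the dual Fredholm space it rules out a cokernel, so $\Box_g-\lambda:\cX\to L^2_\scl$ is an isomorphism.

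To bridge from $L^2_\scl$ to the Fredholm domain, I would take $u\in L^2_\scl$ with $(\Box_g-\lambda)u=0$ and upgrade $u$ to $\cX$ via the standard microlocal propagation package in the scattering calculus: elliptic regularity on $\Ell(\Box_g-\lambda)$, real principal type propagation along null-bicharacteristics on the characteristic set, and radial-point estimates at the radial sets. The non-trapping hypothesis guarantees that every null-bicharacteristic enters and exits at the radial sets, so once the radial estimate is run the propagation closes and $u\in\cX$; the invertibility from the previous step then forces $u=0$.

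The hard part is the radial-point upgrade. The radial estimate comes with a regularity threshold and can be propagated through a radial set only from the correct side of that threshold; one must therefore arrange the variable order on $\cX$ so that the a priori $L^2_\scl$ regularity of $u$ lies on the required side at every radial set for the Feynman choice associated with $\sgn\im\lambda$. This is the refinement alluded to in the abstract, and it is here that the positivity furnished by $\im\lambda$ in the commutator calculation enters decisively, making the compatibility conditions at all radial sets simultaneously attainable and closing the argument.
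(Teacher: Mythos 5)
Your plan is a dual reformulation of the paper's argument, and it is valid as a reduction: essential self-adjointness from the symmetric-operator criterion does reduce to showing that the $L^2_\scl$ distributional kernel of $\Box_g-\bar\lambda$ is trivial for $\lambda\notin\RR$. The paper instead works in the surjectivity direction (solving $(\Box_g-\lambda)u=f$ for $f\in\dCI(M)$ dense, and showing $u$ lies in an appropriate domain $D$ on which $\Box_g$ is symmetric), but the microlocal machinery --- Fredholm theory on variable-order scattering Sobolev spaces, radial-point estimates, a formal pairing identity --- is the same.

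There is a genuine gap in your kernel-triviality step. The identity $\im\langle(\Box_g-\lambda)u,u\rangle_{L^2_\scl}=-\im\lambda\|u\|^2_{L^2_\scl}$ is \emph{not} well-defined on $\cX^{s,r}$: the Feynman decay order satisfies $r<-1/2$ at the sinks, so a generic element of $\cX^{s,r}$ is not in $L^2_\scl$ and the right-hand side is meaningless. The paper handles this in two stages that you skip: (a) an extra borderline decay estimate at the endpoint weight $l'=0$, whose positivity comes precisely from the $\im\lambda\|\check A u\|^2$ term in the positive-commutator argument, shows that kernel elements of $P$ on $\cX^{s,r}$ actually lie in $L^2_\scl$; and (b) a regularization with a uniformly bounded family $\Lambda_t\in\Psiscc^{-\infty,0}$ is used to justify the integration by parts $\langle Pu,u\rangle-\langle u,Pu\rangle$, which is not automatic because $u$ a priori has too little differential regularity. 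You invoke the positivity of $\im\lambda$ in the commutator, but assign it the wrong role, placing it in the radial-point propagation step where it is not in fact needed. In your reverse direction (starting from $u\in L^2_\scl$) step (a) could be bypassed by interpolating $L^2_\scl=\Hsc^{0,0}$ with the propagation output $\Hsc^{N,-1/2-\ep}$ to land in $\Hsc^{1/2,-1/2}$, which is exactly the regularity needed to run the regularized pairing; but then the full invertibility of $P$ on $\cX$ is not needed at all, and the claim of a ``pairing identity valid on $\cX$'' should simply be dropped. Finally, the statement that one can ``arrange the variable order on $\cX$ so that the a priori $L^2_\scl$ regularity of $u$ lies on the required side at every radial set'' cannot hold literally for a single monotone order that is above $-1/2$ at sources and below $-1/2$ at sinks: the correct picture is that $L^2_\scl$ is above threshold at the sources (so the high-regularity estimate launches there), while at the sinks the low-regularity radial estimate applies and its output is necessarily below $-1/2$.
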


As far as the author is aware, the first general mathematically
precise result in this
direction is that of Derezi{\'n}ski and Siemssen
\cite{Derezinski-Siemssen:Feynman}, see also
\cite{Derezinski-Siemssen:Evolution}, who assumed time-translation
invariance, though there is a long history in the physics literature
of treating the wave operator as at least a potentially self-adjoint
operator. Note that this time translation invariance means that the
overlap of the present paper with \cite{Derezinski-Siemssen:Feynman}
is minimal.
Our result also relates to recent/ongoing work of Nakamura and Taira,
see \cite{Taira:Strichartz}, with details of the directly relevant
aspects being parts of works in progress.
In the work of
Nakamura and Taira \cite{Taira:Strichartz}, the non-trapping condition (for the
purpose of essential self-adjointness) is replaced by a positive
energy condition.

The key part of proving the theorem is to show that $(\Box_g\pm
\imath) u=f$ is solvable when $f\in\CI_c(M^\circ)$. Concretely, we take
$$
D=\{u\in\Hsc^{1,-1/2}\cap L^2_\scl:\Box_g u\in
L^2_\scl\},
$$
the weighted scattering Sobolev spaces
being recalled below, and then
a straightforward regularization argument shows that $\CI_c(M^\circ)$ is dense in it and $\Box_g$ is
symmetric on it. The main point is thus to show that $\CI_c(M^\circ)\subset (\Box_g\pm \iota)
D\subset L^2_{\scl}$ and hence $(\Box_g\pm \iota)D$ is dense in $L^2_\scl$.
We do so by using a Fredholm framework for
inverting $\Box_g-\lambda$ on appropriate variable order Sobolev
spaces discussed below; this in fact works uniformly to the real
axis in $\lambda$, thus giving the limiting absorption
principle. We then show additional regularity of the solution, proving the Theorem~\ref{thm:ess-sa}.

The aforementioned Fredholm framework gives rise to the massive
Feynman propagators via the limiting absorption principle.
G\'erard 
and Wrochna studied these in a different Fredholm setting in 
\cite{Gerard-Wrochna:Hadamard,Gerard-Wrochna:Massive}, based in part 
on earlier work of B\"ar and Strohmaier \cite{Baer-Strohmaier:Index}.

We finally comment on some generalizations. Considering
electromagnetic potentials $A$ amounts to working with
$(-\imath d-A)_g^*(-\imath d- A)+V$. If $A,V$ are real and symbols of
negative order (thus decaying) with
values in one-forms, resp.\ scalars, all our results and arguments are unaffected. If
$A,V$ are real and symbols of order $0$ then essential
self-adjointness (including its proof) is unaffected.

Working with the
wave operator on differential forms of any (or all) form degree or on
tensors again does not affect the Fredholm theory; again adding to
these symmetric (on $\CI_c(M^\circ)$ with values in forms or tensors,
with the $L^2$-inner product) first order operators with symbolic of negative order
coefficients (such as the electromagnetic terms above) does not affect
the Fredholm theory either. In particular, the limiting absorption
principle holds in the sense of Theorem~\ref{thm:LAP}, i.e.\ a
limiting resolvent exists, assuming that at $\lambda\in\RR$ the a
priori finite dimensional
nullspace of $P=\Box_g-\lambda$ and $P^*$ is trivial. ({\em Both} of
these have to be assumed to be trivial: in Theorem~\ref{thm:LAP},
index $0$ considerations mean that only one of these need to be assumed.) However, the argument for showing the
triviality of the a priori finite dimensional nullspace of
$P=\Box_g-\lambda$, $\lambda\notin\RR$, and its adjoint in
Lemma~\ref{lemma:trivial-nullspace} would be affected since the inner
product with respect to which the operator is symmetric is no longer
positive definite. However, if the operator has additional structure,
the nullspace may be shown to be trivial by other arguments, e.g.\
Wick rotations work in the case of translation invariant metrics on
$\RR^n$; the perturbation stability of the Fredholm framework implies
the same conclusion on perturbations of these in the scattering
category. Hence, in these cases (when the conclusion of
Lemma~\ref{lemma:trivial-nullspace} holds), the essential
self-adjointness also holds.

\section{Background}\label{sec:background}
We now recall some background. We refer to \cite{RBMSpec} for the
introduction of scattering, or sc-, structures, and to
\cite{Vasy:Propagation-Notes,Vasy:Minicourse} for another discussion
which emphasizes an $\RR^n$-based perspective localizing to asymptotic
cones.
Recall that on a manifold with boundary $M$, the space of {\em b-vector fields} $\Vb(M)$ is the Lie
algebra of smooth vector fields tangent to $\pa M$ (indeed, this is
the definition even for manifolds with corners, which will be used
below for the compactified cotangent bundle), while the space of
{\em scattering vector fields or sc-vector fields} is
$\Vsc(M)=x\Vb(M)$, where $x$ is any {\em boundary defining function}, i.e.\
a non-negative $\CI$ function on $M$, with zero set exactly $\pa M$
such that $dx$ is non-zero at $\pa M$. Such vector fields are exactly
all smooth sections of a vector bundle, $\Tsc M$, over $M$, called the
{\em scattering tangent bundle}, which over
the interior $M^\circ$ is naturally identified with $T
M^\circ$. Indeed, notice that in a local coordinate chart, in which
$x$ is one of the coordinates, and the other coordinates (coordinates
on $\pa M$) are $y_1,\ldots,y_{n-1}$, $V\in\Vsc(M)$ means exactly that
$V=a_0(x^2\pa_x)+\sum_{j=1}^{n-1}a_j (x\pa_{y_j})$ with $a_j$ smooth
in the chart, so $x^2\pa_x$, $x\pa_{y_1},\ldots,x\pa_{y_{n-1}}$ give a
local basis of smooth sections, and thus a local basis for the fibers
of the vector bundle $\Tsc M$. Hence, the $a_j$ are coordinates on the
fibers of $\Tsc M$ (locally), and thus
$x,y_1,\ldots,y_{n-1},a_0,a_1,\ldots,a_{n-1}$ are local coordinates on the
bundle $\Tsc M$. There is a dual vector bundle,
$\Tsc^*M$, called the {\em scattering cotangent bundle}, with local basis
$\frac{dx}{x^2},\frac{dy_1}{x},\ldots,\frac{dy_{n-1}}{x}$. A
{\em sc-metric of signature $(k,n-k)$} is then a smooth (in the base
point $p$) non-degenerate symmetric bilinear map $\Tsc_p M\times\Tsc_p
M\to \RR$ of signature $(k,n-k)$. Equivalently, it is a smooth section
of $\Tsc^*M\otimes_s\Tsc^*M$ (symmetric tensor product) of the
appropriate signature.
Then $L^2_\scl$ is the $L^2$-space of
the metric density of any sc-metric (either Lorentzian or
Riemannian, or of another definite signature), with all choices being
equivalent in that they define the same space and equivalent norms, and $\Hsc^{s,r}$ is the
corresponding weighted Sobolev space, so if $s\geq 0$ integer then
$$
\Hsc^{s,0}=\{u\in L^2_\scl(M):\ \forall k\leq s\  \forall V_1,\ldots
V_k\in\Vsc(M),\ V_1\ldots V_k u\in L^2_{\scl}\},
$$
and $\Hsc^{s,r}(M)=x^r\Hsc^{s,0}(M)$.

As an example, $M$ could be the {\em radial compactification}
$\overline{\RR^n}$ of $\RR^n$,
in which a sphere $\sphere^{n-1}$ is attached as the ideal boundary of
$\RR^n$, so the compactification is diffeomorphic to a closed
ball. Concretely, a neighborhood of the boundary is diffeomorphic to
$[0,\ep)_x\times\sphere^{n-1}$, $\ep>0$, whose interior,
$(0,\ep)_x\times\sphere^{n-1}$ is identified with the subset
$\{z\in\RR^n:\ |z|>\ep^{-1}\}$ via the reciprocal spherical coordinate
map, $(0,\ep)\times\sphere^{n-1}\ni (x,\omega)\mapsto
x^{-1}\omega\in\RR^n$, where the sphere is regarded as a submanifold
of $\RR^n$ to make sense of the map.
Then any translation invariant metric of any signature is (i.e.\
can be naturally identified with) a
sc-metric of the same signature. In fact, $\CI(M)$ is then the space
of {\em classical (one-step polyhomogeneous) symbols of order $0$} on $\RR^n$,
$\dCI(M)$ (the space of $\CI$ functions on $M$ vanishing to infinite
order at $\pa M$) is the space of Schwartz functions $\cS(\RR^n)$,
$\Vsc(M)$ is spanned by the lift of translation invariant vector
fields $\pa_{z_j}$, $j=1,\ldots,n$, over $\CI(M)$, i.e.\ an element of
$\Vsc(M)$ is of the form $\sum a_j\pa_{z_j}$ with $a_j\in\CI(M)$,
i.e.\ a classical symbol of order $0$, and similarly
$\Tsc^*M\otimes_s\Tsc^*M$ is spanned by the lifts of $dz_i\otimes_s
dz_j$ with $\CI(M)$ coefficients. Correspondingly, $L^2_\scl$ is the
standard Lebesgue space, $\Hsc^{s,r}$ the standard weighted Sobolev
space.

More generally, a coordinate neighborhood of a point on the
boundary of a manifold with boundary $M$ can be identified with a
similar coordinate neighborhood of a point on the boundary of
$\overline{\RR^n}$. Note that from the perspective of $\RR^n$, such a
neighborhood is asymptotically conic. Rather than following the above
intrinsic definitions, one could transplant the notions discussed
above directly from $\RR^n$ via such an identification, {\em exactly}
how standard notions on manifolds without boundary are defined by
identifying coordinate charts with open subsets of $\RR^n$; this is
the approach taken by \cite{Vasy:Minicourse, Vasy:Propagation-Notes}.

In particular, this gives a convenient way of introducing scattering
pseudodifferential operators $\Psiscc^{m,l}(M)$ by reducing to the case
of $\overline{\RR^n}$, or equivalently to an appropriate uniform
structure in the interior, $\RR^n$. (The notation $\Psiscc$ stands
for `scattering conormal'; \cite{RBMSpec} uses $\Psisc$ for
`classical' scattering; classical
symbols are the one-step polyhomogeneous ones.) In the present case these are
simply quantizations of {\em (product-type) symbols} of order $(m,l)$,
$a\in S^{m,l}$,
i.e.\ $\CI$ functions on $\RR^n_z\times\RR^n_\zeta$ such that for all $\alpha,\beta$
$$
|(D_z^\alpha D_\zeta^\beta a)(z,\zeta)|\leq C_{\alpha\beta}\langle
z\rangle^{l-|\alpha|}\langle \xi\rangle^{m-|\beta|}.
$$
Note that as in \cite{Vasy:Propagation-Notes,Vasy:Minicourse}, the second, decay order, uses
the {\em opposite sign convention} than Melrose's original definition
\cite{RBMSpec}; thus, the space $\Psiscc^{m,l}(M)$ gets bigger with
increasing $m,l$. One also has variable order pseudodifferential
symbols and operators. In this paper the relevant order is the second, decay
order, which we must allow to vary, thus $l$ is in $S^{0,0}$; for this
purpose we also need to relax the {\em type} of the symbol estimate
and allow small power (more optimally logarithmic) losses: thus the
estimates for $a\in S^{m,l}_\delta$ are, for fixed $\delta\in (0,1/2)$, which could be taken small
as one wishes for our purposes,
$$
|(D_z^\alpha D_\zeta^\beta a)(z,\zeta)|\leq C_{\alpha\beta}\langle
z\rangle^{l(z,\zeta)-|\alpha|+\delta(|\alpha|+|\beta|)}\langle \xi\rangle^{m-|\beta|+\delta(|\alpha|+|\beta|)}.
$$
The standard concepts, such as the principal symbol, still work; in
the present case it lies in
$S_\delta^{m,l}/S_\delta^{m-1+2\delta,l-1+2\delta}$, and it is
multiplicative, i.e.\ the principal symbol of a product is the product
of the principal symbols, of course with the appropriate orders as usual. Since the choice
of $\delta$ is usually irrelevant, we typically suppress it in the subscripts. One
can thus define the ellipticity, etc., as usual. Then the elements of $\Psiscc^{0,0}$
are bounded operators on all weighted Sobolev spaces $\Hsc^{s,r}$, and
one can define variable order Sobolev spaces (with just $r$ variable for
notational simplicity) by
taking $r_0<\inf r$, and $A\in\Psiscc^{s,r}$ elliptic, and saying
$$
\Hsc^{s,r}=\{u\in\Hsc^{s,r_0}:\ Au\in L^2_\scl\},
$$
with the norm whose square is
$$
\|u\|_{\Hsc^{s,r}}^2=\|u\|_{\Hsc^{s,r_0}}^2+\|Au\|^2_{L^2_\scl};
$$
see \cite{Vasy:Propagation-Notes,Vasy:Minicourse} for details.

It is also important that in addition to principal symbols of
products, we can compute principal symbols of commutators. Concretely,
if $A\in\Psiscc^{s,r}$ and $B\in\Psiscc^{s',r'}$ then
$$
[A,B]\in\Psiscc^{s+s'-1+2\delta,r+r'-1+2\delta}
$$
and with $a$, resp.\ $b$, denoting the principal symbols of $A$,
resp.\ $B$, its principal symbol is the Poisson bracket
$\frac{1}{i}\{a,b\}$ (which, recall, arises from the symplectic
structure on $T^*M^\circ$). In the case of $M$ being the radial compactification of
$\RR^n$, this is simply
$$
\frac{1}{i}\{a,b\}=\frac{1}{i}\sum_{j=1}^n
(\pa_{\zeta_j}a)(\pa_{z_j}b)-(\pa_{z_j} a)(\pa_{\zeta_j}b).
$$
Writing coordinates on the fibers of the scattering cotangent bundle
as $\tau,\mu_1,\ldots,\mu_{n-1}$, sc-dual to the coordinates
$x,y_1,\ldots,y_{n-1}$ discussed above, i.e.\ sc-covectors are written as
$$
\tau\,\frac{dx}{x^2}+\sum_j\mu_j\,\frac{dy_j}{x}
$$
we have
\begin{equation}\begin{aligned}\label{eq:Ham-vf}
\{a,b\}=H_a b=&x\Big((\pa_\tau
a)\big((x\pa_x+\mu\cdot\pa_\mu)b\big)-\big((x\pa_x+\mu\cdot\pa_\mu)a\big)(\pa_\tau
b)\\
&\qquad\qquad+\sum_j\big((\pa_{\mu_j}a)(\pa_{y_j}b)-(\pa_{y_j}a)(\pa_{\mu_j}b)\big)\Big),
\end{aligned}\end{equation}
see \cite[Equation~(5.24)]{RBMSpec}, as follows by a change of
variables computation. Here $H_a$ is called the {\em Hamilton vector
  field} of $a$.

Finally, we need to discuss microlocalization. Since there are two
different important behaviors, in the case of $T^*\RR^n$ these being
$|z|\to\infty$ and $|\zeta|\to\infty$, it is {\em even more useful to
  compactify phase space} than in the usual microlocal analysis
setting, where using homogeneity in dilations of the fibers of the
cotangent bundle is an effective substitute. In the case of $T^*\RR^n$, this
compactified phase space is
$$
\overline{\Tsc^*}\overline{\RR^n}=\overline{\RR^n_z}\times\overline{\RR^n_\zeta}.
$$
i.e.\ we compactify the position and the momentum space separately
using the above radial compactification. Thus $\overline{\Tsc^*}\overline{\RR^n}$ is
the product of two closed balls, and hence it is a manifold with
corners. The two boundary hypersurfaces are $\overline{\RR^n}\times
\pa \overline{\RR^n}$, which is {\em `fiber infinity'}, where standard
microlocal analysis takes place, and $\pa\overline{\RR^n}\times
\overline{\RR^n}$, i.e.\ {\em `base infinity'}; these intersect in the
corner $\pa\overline{\RR^n}\times
\pa\overline{\RR^n}$. The locus of microlocalization is then
$$
\pa \overline{\Tsc^*}\overline{\RR^n}=\pa\overline{\RR^n}\times
\overline{\RR^n}\cup \overline{\RR^n}\times
\pa \overline{\RR^n};
$$
thus the {\em elliptic set}, the {\em characteristic set} and the {\em
  wave front set} are subsets of this.

These notions immediately extend to general manifolds via the local
coordinate identifications. The general compactified phase space is
the fiber-radial compactification $\overline{\Tsc^*} M$ of $\Tsc^*M$;
the locus of microlocalization is its boundary
$$
\pa \overline{\Tsc^*} M=\overline{\Tsc^*}_{\pa M}M\cup\Ssc^*M,
$$
where $\Ssc^*M$ is fiber infinity, i.e.\ the boundary of the fiber
compactification. One can take $x$ as a boundary defining function of
base infinity $\overline{\Tsc^*}_{\pa M}M$ and (with the local
coordinate notation from above)
$\rho_\infty=\langle(\tau,\mu)\rangle^{-1}=(|(\tau,\mu)|^2+1)^{-1/2}$ as a
defining function of fiber infinity, where $|.|$ is the length with
respect to any Riemannian sc-metric. Relating to the above discussion
of variable order spaces, one may have e.g.\ the decay order as smooth
function on $\overline{\Tsc^*}_{\pa M}M$; in order to match with the
previous definition one extends it to a smooth function on all of
$\overline{\Tsc^*}M$, with all potential extensions resulting in exactly the
same Sobolev space as can be seen immediately from the definition.

Note that in view of the vanishing factor $x$ on the
right hand side of \eqref{eq:Ham-vf}, as well as the one order lower than that of $a$ homogeneity of
\begin{equation*}\begin{aligned}
H_a=&x\Big((\pa_\tau
a)(x\pa_x+\mu\cdot\pa_\mu)-\big((x\pa_x+\mu\cdot\pa_\mu)a\big)\pa_\tau
\\
&\qquad\qquad+\sum_j\big((\pa_{\mu_j}a)\pa_{y_j}-(\pa_{y_j}a)\pa_{\mu_j}\big)\Big),
\end{aligned}\end{equation*}
one may want to rescale $H_a$, factoring out this vanishing. In order
to obtain a well-behaved, namely smooth and at least potentially
non-degenerate, vector field on the compactification, one may consider
$$
H_{a,s,r}=x^{r-1}\rho_\infty^{s-1} H_a.
$$
If $a$ is classical this becomes a smooth vector field tangent to the
boundary of the compactified space $\overline{\Tsc^*}M$, and
thus defines a flow on it (the {\em Hamilton flow}); for general $a$ the
vector field is conormal of order $(0,0)$ as a vector field tangent to
the boundary, i.e.\ is in $S^{0,0}\Vb(\overline{\Tsc^*}M)$. Note that
the defining functions we factored out are defined only up to a smooth
positive multiple (smooth on the compact space, thus this means
bounded from above and below by positive constants in particular), and
hence the rescaled vector field is only so defined, but such a change
merely reparameterizes the flow, which is irrelevant for
considerations below. Radial points of $H_a$ are then points on the
boundary of $\overline{\Tsc^*}M$ at which the $H_{a,s,r}$ vanishes (as
a vector in $T \overline{\Tsc^*}M$), thus are critical points of the
flow. Note that at such points $H_{a,s,r}$ need {\em not} vanish as a
b-vector field, so e.g.\ it may have a non-trivial $x\pa_x$ component
(or the analogous statement at fiber infinity);
indeed this non-vanishing is what makes the radial point
non-degenerate: in analytic estimates $x\pa_x$ hitting the weight
$x^m$ of a commutant is what can give a contribution of a definite sign.

After these differentiable manifold structure type discussion in the
sc-category, we briefly discuss the geometry, namely metrics. For the
purposes of the present paper what we need is that the metric $g$ is a
Lorentzian (or more general pseudo-Riemannian) sc-metric for which the
Hamilton flow has a source/sink structure. Thus, we need that there
are submanifolds of $\overline{\Tsc^*}_{\pa M}M$ which are transversal
to $\Ssc^*_{\pa M}M$ and are normal sources 
(meaning normally to the submanifold they are sources) $L_-$, resp.\ sinks $L_+$, of
the Hamilton flow of the dual metric function, $G$ (the principal
symbol of $\Box_g$). The definition of a {\em non-trapping sc-metric} $g$ is
then that $g$ is a sc-metric (of some signature) such that all
integral curves of $H_G$ inside the characteristic set $\{G=0\}$ at
$\Ssc^* M$ (i.e.\ fiber infinity), except those contained
in $L_\pm$, tend to $L_+$ (indeed, necessarily to $L_+\cap\Ssc^*M$) in the forward
and $L_-$ in the backward direction. We call a sc-metric {\em non-trapping
at energy $\lambda$} if it is non-trapping in the sense above, and in
addition, all
integral curves of $H_G$ inside the $\lambda$ characteristic set $\{G=\lambda\}$ at
$\overline{\Tsc^*}_{\pa M}M$ (i.e.\ base infinity), except those contained
in $L_\pm$, tend to $L_+$ (indeed, necessarily to
$L_+\cap\overline{\Tsc^*}_{\pa M}M$) in the forward
and $L_-$ in the backward direction. 

The class of non-trapping Lorentzian sc-metrics is a much larger class of metrics than that of Lorentzian
scattering metrics introduced in \cite{Baskin-Vasy-Wunsch:Radiation},
for the latter class also demands that $L_\pm$ be (scattering) conormal bundles of
submanifolds $S_\pm$ of $\pa M$ at which the metric has a certain
model form, generalizing that of the Minkowski metric on the radial
compactification, and also that $S_\pm$ are (non-degenerate) zero sets of a certain
function $v\in\CI(\pa M)$, and $\{v>0\}$ has two connected components
$C_\pm$ (`spherical caps' in Minkowski space, and though they may not
be spherical in any sense, we continue calling them so in general) with $S_\pm$ as their respective
boundaries, while $\{v<0\}=C_0$ has two boundary components
$S_\pm$. The `non-trapping at energy $\lambda$' condition played no
role in \cite{Baskin-Vasy-Wunsch:Radiation}, since that paper
considered the wave equation ($\lambda=0$) and rescaled the wave
operator to a b-operator, for which the non-trapping condition is
exactly the above sc-non-trapping condition; this rescaling is
possible and the non-trapping claim holds since
the dual metric $G$ is necessarily a homogeneous quadratic polynomial on the
fibers of $\Tsc^*M$, unlike $G-\lambda$ for non-zero $\lambda$.

\section{Fredholm theory and essential self-adjointness}
In our proof of the main theorem we focus on the Lorentzian case of
signature $(1,n-1)$ for
the sake of being definite in terminology;
the general pseudo-Riemannian case
barely differs, except that the {\em only} reasonable problems are the
Feynman and anti-Feynman problems, but they are also {\em the only ones
  that matter} below. In all of our discussions below we assume that
the metrics are {\em non-trapping sc-metrics}.

In order to get started, we first
recall that (assuming $M$ is connected -- otherwise the statement is
for each connected component) for $\lambda>0$ the Klein-Gordon operator
$P=\Box_g-\lambda$ has four Fredholm problems, see
\cite{Vasy:Positive, Vasy:Minicourse, Vasy:Propagation-Notes},
corresponding to the characteristic set having two connected
components. (This follows from the characteristic set $\{\zeta:\
G(\zeta,\zeta)=\lambda\}$, $G=g^{-1}$, having two connected components fiberwise:
this fiberwise
characteristic set is the two-sheeted hyperboloid.) Indeed, in each connected component of the characteristic
set one can choose the direction in which one propagates estimates for
$P$, and then for $P^*$ using dual spaces one propagates the estimates
in the opposite direction, resulting in $2^2$
possibilities. Concretely, these are the retarded,
advanced, Feynman and anti-Feynman, Fredholm problems; the direction
of propagation is encoded by the use of appropriate weighted Sobolev type
spaces. Concretely, these are based on variable order Sobolev spaces
$\Hsc^{s,r}$, where $s$ is constant, $r$ variable, a function on
$\overline{\Tsc^*}_{\pa M}M$, monotone along the rescaled Hamilton
flow, and satisfies the inequalities $r>-1/2$ at the radial points
from which estimates are propagated, $r<-1/2$ at the radial points to
which estimates are propagated,
and
$$
\cY^{s-1,r+1}=\Hsc^{s-1,r+1},\ \cX^{s,r}=\{u\in\Hsc^{s,r}:\ (\Box_g-\lambda) u\in\Hsc^{s-1,r+1}\},
$$
with
$$
P=\Box_g-\lambda:\cX^{s,r}\to\cY^{s-1,r+1}
$$
Fredholm.
Corresponding to the above discussion, this fact relies on the non-trapping nature of the bicharacteristic
flow within the characteristic set, which has two parts: the part at
fiber infinity, in $\Ssc^*M$, which is independent of
$\lambda$, and the part at spatial infinity, $\overline{\Tsc^*}_{\pa
  M}M$, which does depend on $\lambda$. In particular, these
non-trapping conditions are perturbation stable, and hold for the
Minkowski (as well as translation invariant pseudo-Riemannian on $\RR^n$) metrics. Note that the choice of $s,r$ with $r$ satisfying the above
constraints is irrelevant; and the nullspace automatically lies in the
intersection of all these spaces; thus in particular for elements $u$
of the nullspace of $P$, $\WFsc(u)$ is a subset of the radial points towards
which the estimates are propagated.

If $\lambda<0$ (and $n=\dim M\geq 3$), then due to the behavior of the characteristic set at
base infinity, the characteristic set has only one connected
component (the characteristic set $\{\zeta:\
G(\zeta,\zeta)=\lambda\}$ has one connected component fiberwise: this fiberwise
characteristic set is the one-sheeted hyperboloid.), and correspondingly only two of these problems remain:
Feynman and anti-Feynman problems. Note that the Cauchy problem (or
retarded/advances problems) is
still solvable, but the solution will typically grow exponentially,
thus it does not exist as far as polynomially weighted Sobolev spaces
(our world in this paper) are concerned.
(For $\lambda=0$ we still have the four problems, but in weighted
b-Sobolev spaces, see \cite{Gell-Redman-Haber-Vasy:Feynman}, which we
do not discuss here.)

Now, if $\lambda$ is made complex, of course the usual principal
(and even the subprincipal!) symbol are not affected, and correspondingly
estimates at fiber infinity, $\Ssc^*M$, are unchanged over
$M^\circ$. However, the estimates at $\overline{\Tsc^*}_{\pa M}M$
become more delicate.

Namely, in this region one has a non-real principal symbol (since
$\lambda$ is part of it). Thus, by the usual propagation estimates
(these are the ones used for {\em `complex absorption'}) one can propagate estimates in the
{\em forward} direction along the $H_{\re p}$ flow when $\im\lambda\geq 0$, and
in the {\em backward} direction when $\im\lambda\leq 0$. (See
\cite[Section~5.4.5]{Vasy:Minicourse} and
\cite{Vasy:Propagation-Notes},
as well as the usual microlocal analysis version in \cite[Section~2.5]{Vasy-Dyatlov:Microlocal-Kerr}.)  Of course,
the operator is elliptic at {\em finite} points (not at $\Ssc^*M$) of
$\overline{\Tsc^*}_{\pa M}$ when $\im\lambda>0$; the point is that the
estimates work at the corner (fiber infinity at $\pa M$), and they
work {\em uniformly} in $\lambda$ even as $\im\lambda\to 0$. This propagation works for
any $s,r$ ($s$ a priori relevant only when one is at fiber infinity at
$\pa M$), including variable $r$, when $r$ is monotone decreasing in
the direction in which the estimates are propagated.

Notice that corresponding to the ellipticity at finite points, for these estimates, as well as the ones below, the only
relevant non-trapping condition is the basic one concerning
bicharacteristics at fiber infinity, $\Ssc^*M$, i.e.\ the
`non-trapping at energy $\lambda$' condition is only relevant when one
wants to let $\lambda$ to the real axis, as we do below for the
limiting absorption principle (but not for the essential
self-adjointness discussion), and then the relevant
condition is non-trapping at {\em the limiting energy $\lambda$}.

Most importantly
though one needs to get radial point estimates, however. For real
$\lambda\neq 0$
these are two estimates, see \cite[Section~5.4.7]{Vasy:Minicourse} and
\cite{Vasy:Propagation-Notes}, as well as the usual microlocal analysis version in \cite[Section~2.4]{Vasy-Dyatlov:Microlocal-Kerr}. The first, high `regularity' (where here this
means decay), which gives a `free' estimate at the radial point, is of the form
$$
\|Qu\|_{\Hsc^{s,r}}\leq C(\|Q_1 Pu\|_{\Hsc^{s-1,r+1}}+\|Q_1 u\|_{\Hsc^{s',r'}}+\|u\|_{\Hsc^{M,N}})
$$
when $r>r'>-1/2$, $s,s',M,N$ arbitrary (one usually considers $s',M,N$ very large
and negative; these are background error terms with relatively compact
properties), $Q$ elliptic at the radial set, with wave front set in
a small neighborhood, $Q_1$ elliptic on $\WF'(Q)$ and
bicharacteristics from all points in the intersection of $\WF'(Q)$ and
the characteristic set tend to $L$ in the appropriate
(forward/backward) direction depending on the sink/source nature,
remaining in $\Ell(Q_1)$.
The second, low `regularity' (where again here this
means decay), which allows one to propagate estimates into the radial
point from a punctured neighborhood, is of the form
$$
\|Qu\|_{\Hsc^{s,r}}\leq C(\|Q_2u\|_{\Hsc^{s,r}}+\|Q_1 Pu\|_{\Hsc^{s-1,r+1}}+\|Q_1 u\|_{\Hsc^{s',r'}}+\|u\|_{\Hsc^{M,N}})
$$
when $r<-1/2$, $s,s',r',M,N$ arbitrary (one considers $s',r',M,N$ very large
and negative), $Q$ elliptic at the radial set, with wave front set in
a small neighborhood, $Q_1$ elliptic on $\WF'(Q)$ and
bicharacteristics from all points in $\WF'(Q)$ intersected with the
characteristic set which are not in $L$, tend to $L$ in the appropriate
(forward/backward) direction depending on the sink/source nature,
remaining in $\Ell(Q_1)$, and intersect $\Ell(Q_2)$ at some point in
the opposite direction along the flow, still remaining in $\Ell(Q_1)$.

Now allowing $\lambda$ complex, say $\im\lambda\geq 0$, one can only propagate estimates in
the forward direction along the $H_{\re p}$-flow, and correspondingly one
obtains the high regularity estimates only at the sources, the low
regularity ones at the sinks (with sources and sinks reversed for
$\im\lambda\leq 0$). These estimates in fact become {\em stronger} than
the ones above, cf.\ the complex absorption arguments in
\cite[Section~5.4.5]{Vasy:Minicourse} and \cite{Vasy:Propagation-Notes}, namely one can in addition control a
term $\|Qu\|_{\Hsc^{s-1/2,r+1/2}}$, i.e.\ one that is stronger in the
sense of decay (though not differentiability) than that on $Qu$
above. This results from an extra term $\langle \check A^*\im\lambda
\check Au,u\rangle=\im\lambda\|\check A u\|^2$ in the estimate, in addition to the commutator
terms, $\langle[\check A^*\check A,\Box]u,u\rangle$, with $\check A\in
\Psiscc^{m'/2,l'/2}$, $s=(m'+1)/2$, $r=(l'-1)/2$. Thus the estimates
are
\begin{equation}\label{eq:high-reg-rad-point-cmplx}
\|Qu\|_{\Hsc^{s,r}}+\im\lambda \|Qu\|_{\Hsc^{s-1/2,r+1/2}}\leq C(\|Q_1 Pu\|_{\Hsc^{s-1,r+1}}+\|Q_1 u\|_{\Hsc^{s',r'}}+\|u\|_{\Hsc^{M,N}})
\end{equation}
$r>r'>-1/2$, $s,s',M,N$ arbitrary, and
\begin{equation}\begin{aligned}\label{eq:low-reg-rad-point-cmplx}
\|Qu\|_{\Hsc^{s,r}}&+\im\lambda\|Qu\|_{\Hsc^{s-1/2,r+1/2}}\\
&\leq C(\|Q_2u\|_{\Hsc^{s,r}}+\|Q_1 Pu\|_{\Hsc^{s-1,r+1}}+\|Q_1 u\|_{\Hsc^{s',r'}}+\|u\|_{\Hsc^{M,N}})
\end{aligned}\end{equation}
when $r<-1/2$, $s,s',r',M,N$ arbitrary.

Now, taking $r$ with $-1/2<r$ at the sources, $-1/2>r>-1$ at the
sinks, monotone along the flow, $s>1/2$, this in particular gives:

\begin{prop}(See \cite[Section~5.4.8]{Vasy:Minicourse} for the real
  $\lambda$ version.)
Suppose $\lambda\neq 0$. Then for $s,r$ as above corresponding to
either the Feynman spaces ($\im\lambda\geq 0$) or anti-Feynman spaces
($\im\lambda\leq 0$),
the operator
$$
P:\cX^{s,r}\to\cY^{s-1,r+1}
$$
is Fredholm, with
$$
\cY^{s-1,r+1}=\Hsc^{s-1,r+1},\ \cX^{s,r}=\{u\in \Hsc^{s,r}:\ Pu\in\Hsc^{s-1,r+1}\}.
$$
\end{prop}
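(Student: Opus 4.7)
The plan is to prove Fredholmness by establishing a pair of estimates
\begin{equation*}
\|u\|_{\Hsc^{s,r}} \leq C\bigl(\|Pu\|_{\Hsc^{s-1,r+1}} + \|u\|_{\Hsc^{M,N}}\bigr),
\end{equation*}
together with the analogous estimate for the adjoint $P^*$ on the dual spaces, with $M,N$ arbitrarily negative so that the last term is relatively compact. Once these are in hand, the standard functional-analytic argument (closed range, finite dimensional kernel and cokernel) gives the Fredholm property as a map $\cX^{s,r}\to\cY^{s-1,r+1}$.

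The first step is to assemble the microlocal pieces. Away from the characteristic set and finite points of $\overline{\Tsc^*}_{\pa M}M$ where $P$ is elliptic (recall that for $\im\lambda > 0$ one is elliptic at all finite points of base infinity), standard elliptic regularity in $\Psiscc^{*,*}$ provides full control. In the interior of the characteristic set, one uses real principal type/complex absorption propagation in the $H_{\re p}$-flow direction dictated by the sign of $\im\lambda$: forward for $\im\lambda\geq 0$ (Feynman), backward for $\im\lambda\leq 0$ (anti-Feynman). One then glues these to the radial point estimates \eqref{eq:high-reg-rad-point-cmplx} and \eqref{eq:low-reg-rad-point-cmplx}, which are applied respectively at the radial set where the flow enters (a source in the Feynman case, using the high regularity estimate since there $r>-1/2$) and at the radial set where the flow exits (a sink, using the low regularity estimate since there $r<-1/2$). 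The variable order $r$ is chosen to be monotone along the rescaled Hamilton flow so that propagation goes from high $r$ to low $r$, which is what the propagation estimates require; the threshold conditions $r>-1/2$ at sources and $r<-1/2$ at sinks, together with $s>1/2$, match the hypotheses of the two radial point estimates. Since every bicharacteristic in the characteristic set either lies in $L_\pm$ or tends to $L_\pm$ in forward/backward directions (this is exactly non-trapping), a finite open cover argument combining these microlocal estimates yields the global estimate above.

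The dual estimate for $P^*$ is obtained by running the same scheme on the dual spaces: the adjoint of $P=\Box_g-\lambda$ is $\Box_g-\bar\lambda$, so $\im\bar\lambda$ has the opposite sign, reversing the direction in which we may propagate; at the same time, the dual spaces have order $-s+1,-r-1$, and $-r-1$ now is $<-1/2$ at what were sources and $>-1/2$ at what were sinks. Thus high regularity estimates get applied at the (former) sinks and low regularity ones at the (former) sources, exactly consistent with the reversed propagation direction. Monotonicity of the variable order along the flow is preserved after these sign flips, so the same microlocal package gives the estimate for $P^*$.

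The main obstacle will be keeping track of all these sign/direction choices consistently: one must verify that the choice of $r$ (monotone along the flow, crossing or not crossing the threshold $-1/2$ at the correct radial sets) is compatible simultaneously with forward propagation for $P$ when $\im\lambda\geq 0$ and backward propagation for $P^*$, and that the extra terms $\im\lambda\,\|Qu\|_{\Hsc^{s-1/2,r+1/2}}$ supplied by \eqref{eq:high-reg-rad-point-cmplx}--\eqref{eq:low-reg-rad-point-cmplx} remain non-negative (the correct sign) for all microlocal pieces, so that they can be absorbed or discarded uniformly as $\im\lambda\to 0$. Once all this bookkeeping is checked, the compactness of the inclusion $\Hsc^{s,r}\hookrightarrow \Hsc^{M,N}$ (valid because $s>M$ and $r>N$ pointwise once $M,N$ are chosen sufficiently negative) promotes the pair of a priori estimates to the Fredholm conclusion.
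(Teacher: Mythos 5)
Your proposal is correct and follows essentially the same approach the paper takes (which in turn follows \cite[Section~5.4.8]{Vasy:Minicourse}): elliptic regularity off the characteristic set, propagation/complex-absorption estimates in the $H_{\re p}$-flow direction dictated by $\sgn\im\lambda$, the high- and low-regularity radial point estimates \eqref{eq:high-reg-rad-point-cmplx}--\eqref{eq:low-reg-rad-point-cmplx} at sources and sinks respectively with the threshold conditions $r\gtrless -1/2$, all glued via non-trapping to a global a priori estimate, plus the dual estimate for $P^*=\Box_g-\bar\lambda$ on the dual spaces with sources/sinks and thresholds reversed, and then compactness of the error-term inclusion. Your bookkeeping of how the sign flip for $\im\bar\lambda$ matches the role reversal of sources and sinks under $r\mapsto -r-1$ is exactly the consistency check the framework requires, so the argument is complete.
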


One can interpret the estimates
\eqref{eq:high-reg-rad-point-cmplx}-\eqref{eq:low-reg-rad-point-cmplx},
as well as the analogous real principal type estimates in the
characteristic set between the radial points as additional regularity
estimates giving that in fact
\begin{equation}\label{eq:X-sr-refined}
\cX^{s,r}=\{u\in \Hsc^{s,r}\cap \Hsc^{s-1/2,r+1/2}:\ Pu\in\Hsc^{s-1,r+1}\}.
\end{equation}
In particular, this lets one solve $Pu=f$, $f\in\dCI(M)$, up to finite
dimensional obstacles, namely one gets that the solution
$u$ (which exists in the complement of a finite dimensional subspace)
is almost in $L^2$, namely $u\in\Hsc^{0,-\ep}$ for all
$\ep>0$. Indeed, we have that if $Pu=f$, $f\in\dCI(M)$, with
$u\in\cX^{s,r}$ as above, then $u\in
\Hsc^{\tilde s-1/2,\tilde r+1/2}$ for all $\tilde s$ and for all
$\tilde r<-1/2$, thus in $\Hsc^{\infty,-\ep}$ for all $\ep>0$.

This is not quite sufficient, however, since we want to conclude $u\in
L^2_\scl$, and also that there are no finite codimension issues (i.e.\ we
have invertibility and not just Fredholmness) so, for $\im\lambda>0$, one needs to do a borderline
estimate, with $r=-1/2$ at the sink (everywhere else one is in $L^2$
already), which corresponds to $l'=0$. Note that such an estimate cannot
work when $\im\lambda=0$, and thus cannot be uniform in $\im\lambda$
when $\im\lambda>0$. The key point is that in this case the commutator
$[\check A^*\check A,\Box]$ will have principal symbol at $\Ssc^*M$
for which the normally main term (arising from the weight) vanishes at
$L$.

It suffices for us to consider $m'=l'=0$, in which case the
situation is very simple: we will take $\check A$ to be microlocally
the identity near the sinks, i.e.\ to have $\WFsc'(\Id-\check A)$ disjoint
from the sink. (Such microlocalizers play an important role in the
proof of asymptotic completeness in the $N$-body setting; a partially
microlocal version is the work of Sigal and Soffer
\cite{Sigal-Soffer:N} and Yafaev \cite{Yaf}, see \cite{Vasy:Geometry} for a discussion.)

Since $\WFsc(u)$ is in the sink when $f\in\dCI(M)$, for
$u\in\Hsc^{s',r'}$ (with e.g.\ $r'=r+1/2$ from above, so $<0$ but
close to $0$ allowed) the pairing $\langle u,[\check A^*\check
A,\Box]u\rangle$ makes sense
(and a regularized version remains bounded: the regularizer gives the
correct sign as it behaves exactly the same way as if one had a more
decaying weight, i.e.\ as if $l'<0$) if $2r'-l'+2\geq 0$ and
$2s'-m'-1\geq 0$, which holds with $l'=m'=0$ if $r'<0$ is close to $0$
and $s'=1$, say. Then the $\im\lambda$ term gives an estimate for
$\|\check A u\|^2$, which is an estimate for $u$ in
$\Hsc^{m'/2,l'/2}=L^2_\scl$ as desired. In particular, if
$u\in\cX^{s,r}$ and $Pu\in\dCI(M)$ then $u\in L^2_\scl$.

\begin{lemma}\label{lemma:trivial-nullspace}
Suppose that $\im\lambda>0$, and consider the Feynman Fredholm 
problem $\cX^{s,r}\to\cY^{s-1,r}$. 
Then $\Ker P$ and $\Ker P^*$ (on the dual space) are trivial.

Analogous statements hold for $\im\lambda<0$ for the anti-Feynman
Fredholm problem.
\end{lemma}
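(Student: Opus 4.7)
The plan is to reduce the nullspace triviality to the standard positivity identity $\im\langle(\Box_g-\lambda)u,u\rangle=-(\im\lambda)\|u\|_{L^2_\scl}^2$, evaluated on a domain on which $\Box_g$ is symmetric. Concretely, I aim to show that every $u\in\Ker P$ lies in the domain $D=\{v\in\Hsc^{1,-1/2}\cap L^2_\scl:\Box_g v\in L^2_\scl\}$ introduced in the introduction; once that is done, the symmetry of $\Box_g$ on $D$ makes $\langle\Box_g u,u\rangle$ real, so $Pu=0$ together with $\im\lambda>0$ forces $\|u\|_{L^2_\scl}=0$.

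For the membership in $D$, I would proceed in two stages. First, since $Pu=0\in\dCI(M)$, the refined description \eqref{eq:X-sr-refined} and the iteration recorded just before the lemma yield $u\in\Hsc^{\infty,-\epsilon}$ for every $\epsilon>0$; choosing any $\epsilon<1/2$ and invoking the inclusion $\Hsc^{s,r_1}\subset\Hsc^{s,r_2}$ for $r_1\geq r_2$ then gives $u\in\Hsc^{1,-1/2}$, so one half of the condition defining $D$ is met. Second, I would deploy the borderline positive commutator estimate at the Feynman sink described immediately before the lemma, with $m'=l'=0$, $s'=1$, and $r'$ slightly negative so that the regularized pairing $\langle[\check A^*\check A,\Box]u,u\rangle$ is well defined: the choice $l'=0$ makes the principal symbol of the commutator vanish at the sink, so the commutator contribution is absorbable, while the $\im\lambda\,\|\check A u\|_{L^2_\scl}^2$ term has the correct sign. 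Since $\check A$ is microlocally the identity at the sink and $\WFsc(u)$ is confined there, the estimate delivers $u\in L^2_\scl$. Together with $\Box_g u=\lambda u\in L^2_\scl$, this places $u\in D$, and the symmetry pairing closes the argument.

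For $\Ker P^*$ on the dual space, the formal adjoint of $P$ is $P^*=\Box_g-\bar\lambda$, and the dual of the Feynman Fredholm problem is the anti-Feynman Fredholm problem for $P^*$; here $\im\bar\lambda<0$ supplies a commutator term with the favorable sign at the anti-Feynman sinks, so the same two-stage regularity upgrade applies verbatim, placing any $v\in\Ker P^*$ in $D$, and the symmetry pairing then concludes $v=0$. The case $\im\lambda<0$ of the lemma is structurally identical, with the Feynman and anti-Feynman triples interchanged throughout. The genuinely delicate step, I expect, will be the borderline $L^2_\scl$ estimate at the sink: it fails precisely in the limit $\im\lambda\to 0$, so one must select the orders $m',l'$ and the microlocalizer $\check A$ so that the regularized commutator pairing is simultaneously well defined and dominated by the $\im\lambda$-positive term. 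Everything else, namely the propagation statements producing $u\in\Hsc^{\infty,-\epsilon}$ and the final symmetric pairing on $D$, is routine by comparison.
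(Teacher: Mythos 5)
Your proposal is correct and takes essentially the same approach as the paper. The paper proves the lemma by directly regularizing the pairing $\langle Pu,u\rangle-\langle u,Pu\rangle$ with a family $\Lambda_t\in\Psiscc^{-\infty,0}$ and showing the commutator term $\langle[P^*,\Lambda_t]u,u\rangle$ vanishes in the limit using $u\in\Hsc^{1/2,-1/2}$ (which follows from the Feynman-space regularity $u\in\Hsc^{\infty,-\ep}$) and $u\in L^2_\scl$ (from the borderline sink estimate); you instead route through membership in the domain $D$ and the symmetry of $\Box_g$ on $D$, but that symmetry statement is itself established in the paper by precisely the same $\tilde\Lambda_t$-regularization, so the two arguments use identical ingredients and differ only in packaging.
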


\begin{proof}
We have already seen that elements of $\Ker P$ and $\Ker P^*$ lie in
$L^2_\scl$. Thus, formally the lemma
follows from
$$
0=\langle Pu,u\rangle-\langle u,Pu\rangle=\langle
(P-P^*)u,u\rangle=-2\imath\langle\im\lambda u,u\rangle=-2\imath\im\lambda\|u\|^2,
$$
but the issue is that $\langle P^*u,u\rangle$ does not actually make
sense a priori due to the too weak a priori differentiability of $u$
when the unweighted spaces are used (all we know is that $u\in
L^2_\scl$, so $P^* u\in\Hsc^{-2,0}$ only, unless we use
$P-P^*\in\Psiscc^{0,0}$, but even then we need to justify the
integration by parts (because the adjoint a priori puts us in dual spaces)!), so we need to have a more careful, if standard, regularization
argument.

Namely, we take $\Lambda_t\in\Psiscc^{-\infty,0}$, $t\in[0,1]$ such
that the family is uniformly bounded in $\Psiscc^{0,0}$ and converges
to $\Id$ in $\Psiscc^{\ep,0}$, $\ep>0$, as $t\to 0$, and thus strongly on
$L^2_\scl$. Then we have for $t>0$, if $u\in L^2_\scl$ and $Pu\in L^2_\scl$,
\begin{equation*}\begin{aligned}
0&=\langle Pu,u\rangle-\langle u,Pu\rangle=\lim_{t\to 0}\big(\langle \Lambda_t
Pu,u\rangle-\langle \Lambda_t u,Pu\rangle\big)\\
&=\lim_{t\to 0}\big(\langle \Lambda_t
Pu,u\rangle-\langle P^*\Lambda_t u,u\rangle\big)\\
&=\lim_{t\to 0}\big(\langle \Lambda_t
Pu,u\rangle-\langle \Lambda_t P^* u,u\rangle-\langle [P^*,\Lambda_t]
u,u\rangle\big)\\
&=\lim_{t\to 0}\big(\langle \Lambda_t
Pu,u\rangle-\langle \Lambda_t (P+2\imath\im\lambda) u,u\rangle-\langle [P^*,\Lambda_t]
u,u\rangle\big)\\
&=-2\imath\im\lambda\langle u,u\rangle -\lim_{t\to 0}\langle [P^*,\Lambda_t]
u,u\rangle.
\end{aligned}\end{equation*}
Now, $P^*\in\Psiscc^{2,0}$, so $[P^*,\Lambda_t]$ is uniformly bounded
in $\Psiscc^{1,-1}$, and it converges to $[P^*,I]=0$ in
$\Psiscc^{1+\ep,-1+\ep}$ for $\ep>0$, thus strongly as a bounded operator
$\Hsc^{1/2,-1/2}\to\Hsc^{-1/2,1/2}$. Correspondingly, if in addition
$u\in\Hsc^{1/2,-1/2}$, then the last term vanishes, and we conclude
that $u=0$. But we have seen that in the Feynman spaces this holds,
namely $u\in\Hsc^{\infty,-\ep}$ for all $\ep>0$, so we conclude that
$\|u\|^2=0$ and thus $u=0$ as well.

The analogous argument also holds for $P^*$ on the anti-Feynman space,
which proves that $P^*$ is also injective.
\end{proof}

\begin{cor}
Suppose $\im\lambda\neq 0$.
The operator
$P:\cX^{s,r}\to\cY^{s-1,r+1}$ is indeed invertible (not just Fredholm)
and moreover we have for $f\in\dCI(M)$ that $u=P^{-1}f\in L^2_\scl$ as
well.
\end{cor}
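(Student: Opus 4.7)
The plan is to combine the Proposition (Fredholmness) with Lemma~\ref{lemma:trivial-nullspace} (triviality of both nullspaces) to obtain invertibility, and then invoke the borderline radial point estimate discussed before the Lemma to upgrade from near-$L^2$ to $L^2$ decay.

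First, I would argue invertibility. By the Proposition, $P : \cX^{s,r} \to \cY^{s-1,r+1}$ is Fredholm when $(s,r)$ satisfy the Feynman (for $\im\lambda \geq 0$) or anti-Feynman (for $\im\lambda \leq 0$) threshold conditions. By Lemma~\ref{lemma:trivial-nullspace}, applied to $P$ on the Feynman/anti-Feynman space and to $P^*$ on the dual space, $\Ker P = 0$ and $\Ker P^* = 0$. Since a Fredholm operator with trivial kernel and trivial cokernel is a bijection, $P$ is invertible, with bounded inverse by the open mapping theorem.

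Second, I would establish $u = P^{-1} f \in L^2_\scl$ for $f \in \dCI(M)$. Away from the sink of the rescaled Hamilton flow, the refined characterization \eqref{eq:X-sr-refined} together with the real principal type propagation and the high-regularity radial point estimate \eqref{eq:high-reg-rad-point-cmplx} at the sources already place $u$ in $\Hsc^{\infty,-\ep}$ for every $\ep > 0$; in particular the corresponding microlocal pieces lie in $L^2_\scl$. At the sink, where the Feynman threshold forces $r < -1/2$, I would run the borderline commutator argument with $m' = l' = 0$ sketched right before the Lemma: choose $\check A \in \Psiscc^{1/2,-1/2}$ microlocally equal to the identity near the sink with $\WFsc'(\Id - \check A)$ disjoint from it, and pair $\langle u, [\check A^* \check A, \Box] u\rangle$ (justified by a standard regularizer $\Lambda_t$ as in the Lemma, using that $u \in \Hsc^{\infty,-\ep}$ already). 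The weight-induced principal term of the commutator vanishes at the sink (so the failure at $\im\lambda = 0$ is avoided), and the extra $\im\lambda \, \|\check A u\|^2$ term dominates, producing an estimate for $u$ in $L^2_\scl$ microlocally near the sink; combined with the off-sink estimates, $u \in L^2_\scl$ globally.

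The main obstacle is purely bookkeeping: verifying that the regularization/pairing in the borderline estimate closes at the threshold exponents $m' = l' = 0$, given only a priori regularity $u \in \Hsc^{\infty,-\ep}$. This is handled exactly as in the proof of Lemma~\ref{lemma:trivial-nullspace} — take $\Lambda_t \in \Psiscc^{-\infty,0}$ uniformly bounded in $\Psiscc^{0,0}$ and converging strongly to $\Id$, write the commutator identity with $\check A^* \check A$ replaced by $\Lambda_t^* \check A^* \check A \Lambda_t$, use that the resulting commutator remainders are uniformly bounded in operator classes mapping $\Hsc^{1/2,-1/2} \to \Hsc^{-1/2,1/2}$ and converge to zero strongly — and then pass to the limit $t \to 0$. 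This yields the desired $L^2_\scl$ bound and completes the proof.
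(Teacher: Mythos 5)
Your proposal follows the same route as the paper: invertibility comes from combining the Proposition (Fredholmness) with Lemma~\ref{lemma:trivial-nullspace} (trivial $\Ker P$ and $\Ker P^*$), and the $L^2_\scl$-membership comes from the borderline $m'=l'=0$ commutator argument sketched immediately before the Lemma, applied given the a priori regularity $u\in\Hsc^{\infty,-\ep}$.

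One concrete slip: you take $\check A\in\Psiscc^{1/2,-1/2}$, but an operator that is microlocally the identity near the sink (with $\WFsc'(\Id-\check A)$ disjoint from it) must be of order $(0,0)$ there, and the paper's choice is indeed $\check A\in\Psiscc^{0,0}$, corresponding to $m'=l'=0$. This order matters: the $\im\lambda\,\|\check A u\|^2$ term controls $u$ in $\Hsc^{m'/2,l'/2}$, and only $m'=l'=0$ yields $\Hsc^{m'/2,l'/2}=L^2_\scl$; with your stated $(1/2,-1/2)$ you would get $u\in\Hsc^{1/2,-1/2}$, which does not give $L^2_\scl$-membership since the decay order $-1/2$ is negative. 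The rest of your argument makes clear you intend the $(0,0)$ choice, so the logic is otherwise sound, but as written the exponent is wrong.
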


We take
$$
D=\{u\in \Hsc^{1,-1/2}\cap L^2_\scl: \Box_g u\in L^2_\scl\}.
$$
Actually $\Hsc^{1,-1/2}$ in this definition could be replaced by
$\Hsc^{s',r'}$ for any $s'\in[1,2]$, $r'\in[-1/2,0)$, as is immediate
from the following argument; slightly increasing the requirements on
the above choice of $r$ is then needed for the statement of the next sentence.
Then for either sign of $\im\lambda$, and corresponding choices of
$s,r$ (only $r$ depends on $\im\lambda$) as above with the slightly
stronger requirements $s\geq 2$, while $r>-1$, with $r<-1/2$ at the
sinks, we have $\cX^{s,r}\cap L^2_\scl\subset D$ since $r$ takes values in
$(-1,\infty)$, so $r+1/2$ in $(-1/2,\infty)$ and since
$(\Box_g-\lambda)u\in\Hsc^{s-1,r+1}\subset L^2_\scl$ implies $\Box_g u\in L^2_\scl$. Now, $D$ is a Hilbert
space. Moreover, $\dCI(M)$ is dense
in $D$  since
using $\tilde\Lambda_t\in\Psiscc^{-\infty,-\infty}$ uniformly bounded
in $\Psiscc^{0,0}$, converging to $\Id$ in $\Psiscc^{\ep,\ep}$ for all
$\ep>0$, we have  $[\Box_g,\tilde\Lambda_t]$ uniformly bounded in
$\Psiscc^{1,-1}$, converging to $0$ in $\Psiscc^{1+\ep,-1+\ep}$, thus
strongly as a map $\Hsc^{1,-1}\to\Hsc^{0,0}=L^2_\scl$. Hence $\dCI(M)\ni\tilde\Lambda_t u\to u$ in $L^2_\scl$, as
well as in $\Hsc^{1,-1/2}$ and $\Box_g\tilde\Lambda_t
u=\tilde\Lambda_t\Box_g u+[\Box_g,\tilde\Lambda_t]u\to \Box_g u$ in
$L^2_\scl$ as $D\subset\Hsc^{1,-1}$. (Notice that here the argument
goes through with $(1,-1/2)$ replaced by $(s',r')$ in the Sobolev
order in the definition of $D$, corresponding to the remark after the
definition: the density statement becomes {\em easier} then as there
needs to be {\em less} of a gain for the commutator.)
See
\cite[Appendix~A]{Melrose-Vasy-Wunsch:Corners} for a more general
discussion on spaces like $D$; in the present context
\cite[Section~4]{Vasy:Minicourse} would be the relevant setting, but
the present statement is not proved there, though the proof of
\cite[Lemma~A.3]{Melrose-Vasy-Wunsch:Corners} applies, mutatis
mutandis.

Furthermore, $\Box_g$ is symmetric on this domain since
\begin{equation*}\begin{aligned}
\langle\Box_g u,u\rangle-\langle u,\Box_g u\rangle&=\lim_{t\to
  0}\langle\tilde\Lambda_t\Box_g u,u\rangle-\langle \tilde\Lambda_t
u,\Box_g u\rangle\\
&=\lim_{t\to 
  0}\langle\tilde\Lambda_t\Box_g u,u\rangle-\langle \Box_g\tilde\Lambda_t 
u,u\rangle\\
&=\lim_{t\to 
  0}\langle\tilde\Lambda_t\Box_g u,u\rangle-\langle \tilde\Lambda_t \Box_g
u,u\rangle-\langle [\Box_g,\tilde\Lambda_t]
u,u\rangle\\
&=-\lim_{t\to 
  0}\langle [\Box_g,\tilde\Lambda_t]
u,u\rangle.
\end{aligned}\end{equation*}
Indeed, as noted above $[\Box_g,\tilde\Lambda_t]$ is uniformly bounded in
$\Psiscc^{1,-1}$, converging to $0$ in $\Psiscc^{1+\ep,-1+\ep}$, thus
strongly as a map $\Hsc^{1/2,-1/2}\to\Hsc^{-1/2,1/2}$, so for
$u\in D\subset \Hsc^{1,-1/2}\subset\Hsc^{1/2,-1/2}$ the
right hand side tends to $0$ and we have the desired conclusion of
symmetry. (This immediately implies the general $s',r'$ case since the
space $D$ becomes a priori smaller.)

Thus, $\Box_g:D\to L^2_\scl$ is a continuous map, $\CI_c(M^\circ)$ is dense in $D$
(by virtue of $\dCI(M)$ being so), and $\Box_g$ is a symmetric
operator. In order to prove that $\Box_g$ is essentially self-adjoint,
it suffices to prove that for $\lambda\notin\RR$, $\Box_g-\lambda$ has
a dense range in $L^2_\scl$. But $\dCI(M)$ is dense in $L^2_\scl$, so
it suffices to show that for $\im\lambda\neq 0$ and $f\in\dCI(M)$ there exists $u\in D$ such
that $(\Box_g-\lambda)u=f$. But we have seen above that under these
conditions there exists $u\in\cX^{s,r}$ such that
$(\Box_g-\lambda)u=f$, and moreover $u\in L^2_\scl$, so as
$\cX^{s,r}\cap L^2_\scl\subset D$ in view of \eqref{eq:X-sr-refined}, the desired conclusion
follows.
(Here the limitations on $s',r'$ are strongly relevant and required for the
generalized version of $D$ discussed above, together with
the corresponding strengthening of the requirements on $s,r$.) This proves that $\Box_g$ is essentially selfadjoint on
$\CI_c(M^\circ)$, namely proves Theorem~\ref{thm:ess-sa}.

\section{The limiting absorption principle}
The limiting absorption principle is an immediate consequence of our
discussion. Namely, under the assumption of $g$ being non-trapping at
energy $\lambda$ for the limiting $\lambda$ (or interval of
$\lambda$'s, if one wishes), the estimates for $\Box_g-\lambda$ on the Feynman
spaces are uniform in $\im\lambda\geq 0$, and similarly on the
anti-Feynman spaces in $\im\lambda\leq 0$; and indeed, for
$\lambda\in\RR$, $\Box_g-\lambda$ is Fredholm on either one of these
spaces. Furthermore, when $\im\lambda\neq 0$, the operator is
invertible, thus index $0$, and this is stable under perturbations
(even of the kind we discussed), cf.\
\cite[Section~2.7]{Vasy-Dyatlov:Microlocal-Kerr}, which also discusses
continuity in the weak operator topology. In particular, the limit
{\em is} the (anti-)Feynman propagator, up to finite dimensional
nullspace issues on the limiting space.
Thus,

\begin{thm}\label{thm:LAP}
Suppose $\lambda\in\RR\setminus \{0\}$, $g$ is a non-trapping
sc-metric which is non-trapping at energy $\lambda$ and $\Box_g-\lambda$ has
trivial nullspace on either
the Feynman or the anti-Feynman function spaces. Then
$\lim_{\ep\to 0}(\Box_g-(\lambda\pm \imath\ep))^{-1}$ exist in the
weak operator topology on the Feynman ($+$), resp.\ anti-Feynman ($-$)
function spaces, and {\em is} the Feynman, resp.\ anti-Feynman
propagator, i.e.\ the inverse of $\Box_g-\lambda$ on the appropriate
function spaces.
\end{thm}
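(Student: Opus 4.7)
The plan is to deduce the limiting absorption principle from the uniform Fredholm estimates developed in the previous section, together with the assumed triviality of the limiting nullspace. I treat the $+$ (Feynman) case throughout; the $-$ case is entirely symmetric.

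First I would verify that the estimates for $P_\ep := \Box_g - (\lambda+\imath\ep)$ on the Feynman spaces $\cX^{s,r}\to\cY^{s-1,r+1}$ are uniform for $\ep \in [0,1]$. The radial point estimates \eqref{eq:high-reg-rad-point-cmplx}--\eqref{eq:low-reg-rad-point-cmplx}, the complex-absorption-style propagation at $\overline{\Tsc^*}_{\pa M}M$, and the real principal type propagation at $\Ssc^*M$ all have constants independent of $\ep$, and the additional $\im\lambda$-dependent terms only improve the estimate. Under the `non-trapping at energy $\lambda$' hypothesis the bicharacteristic geometry at base infinity is well-behaved down to $\ep=0$, so one obtains the uniform semi-Fredholm estimate
$$
\|u\|_{\cX^{s,r}} \leq C\bigl(\|P_\ep u\|_{\cY^{s-1,r+1}} + \|u\|_{\Hsc^{M,N}}\bigr),
$$
together with its analog for the formal adjoint on the dual space, with $C$ independent of $\ep$ and $M<s$, $N<r$ arbitrary.

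Second, I would promote this to a uniform bound on $P_\ep^{-1}$. For $\ep>0$, $P_\ep$ is invertible by the corollary following Lemma~\ref{lemma:trivial-nullspace}. Suppose, towards a contradiction, $\|P_{\ep_n}^{-1}\|$ blows up for some sequence $\ep_n \to 0^+$; then one extracts $u_n \in \cX^{s,r}$ with $\|u_n\|_{\cX^{s,r}}=1$ and $\|P_{\ep_n}u_n\|_{\cY^{s-1,r+1}}\to 0$. Since $\cX^{s,r}\hookrightarrow \Hsc^{M,N}$ compactly, a subsequence converges strongly in $\Hsc^{M,N}$ and weakly in $\cX^{s,r}$ to some $u$, which satisfies $(\Box_g-\lambda)u=0$ in the distributional sense and lies in the Feynman space, hence $u=0$ by hypothesis. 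The uniform estimate then forces $\|u_n\|_{\cX^{s,r}}\to 0$, contradicting the normalization. Thus $P_\ep^{-1}:\cY^{s-1,r+1}\to\cX^{s,r}$ is uniformly bounded.

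Third, given uniform bounds, the standard weak compactness argument concludes the proof. For any $f\in\cY^{s-1,r+1}$, the family $\{P_\ep^{-1}f\}_{\ep>0}$ is bounded in $\cX^{s,r}$, hence any weak limit point $u$ solves $(\Box_g-\lambda)u=f$ (the linear term passes to the limit by continuity, and the $\imath\ep u$ term vanishes because $\ep_n u_n\to 0$ in $\Hsc^{M,N}$). By the perturbation stability of the Fredholm index, cf.\ \cite[Section~2.7]{Vasy-Dyatlov:Microlocal-Kerr}, the limiting $\Box_g-\lambda$ is Fredholm of index $0$ on $\cX^{s,r}\to\cY^{s-1,r+1}$; the triviality of $\Ker P$ then upgrades this to invertibility, so the weak limit $u$ is unique. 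Uniqueness of the limit combined with sequential weak compactness yields convergence in the weak operator topology, and the limit is, by construction, the Feynman propagator.

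The main obstacle is the second step, namely promoting the uniform Fredholm estimate to a uniform resolvent bound at $\ep=0$; this is where the non-trapping at energy $\lambda$ assumption and the triviality of the limiting kernel are simultaneously exploited, via the compactness of the background embedding $\cX^{s,r}\hookrightarrow\Hsc^{M,N}$.
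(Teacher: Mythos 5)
Your proposal is correct and follows essentially the same route the paper takes: uniform radial-point/propagation estimates down to $\im\lambda=0$ under the non-trapping-at-energy-$\lambda$ hypothesis, combined with stability of the index, the trivial nullspace hypothesis, and a weak-compactness passage to the limit. The paper compresses the second and third steps into a citation of \cite[Section~2.7]{Vasy-Dyatlov:Microlocal-Kerr}; you have simply unpacked what that reference contains (the contradiction argument via the compact inclusion $\Hsc^{s,r}\hookrightarrow\Hsc^{M,N}$ and the extraction of a weak limit), so this is the same proof at a finer level of detail rather than a genuinely different approach.

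One small point worth keeping in mind if you write this up carefully: the spaces $\cX^{s,r}$ themselves depend on $\lambda+\imath\ep$ through the condition $(\Box_g-\lambda-\imath\ep)u\in\Hsc^{s-1,r+1}$, so ``weak convergence in $\cX^{s,r}$'' should really be phrased as boundedness and weak convergence in the fixed space $\Hsc^{s,r}$ together with convergence of $P_{\ep_n}u_n$ in $\Hsc^{s-1,r+1}$; the estimates being stated on fixed Sobolev spaces with fixed relatively compact error terms is exactly what makes this work, as the paper notes at the end of Section~4.
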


\begin{rem}
The argument of \cite[Proposition~3.1]{Vasy-Wrochna:QFT} applies with minor notational
changes (corresponding to the b-setting employed there and the sc-setting
employed here) to prove that the primed wave front set of the Schwartz kernel
of $(\Box_g-(\lambda\pm \imath 0))^{-1}$ is in the backward/forward
flowout of the diagonal of the cotangent bundle $T^*M^\circ$ over the
interior of $M$.
\end{rem}

Of course, it is still a question whether the nullspace of
$\Box_g-\lambda$ is trivial; the set of $\lambda$ for which it is, is
necessarily open by stability. Again, this stability is true even for the relatively drastic
kind of perturbations we have which change the domain space (since the space
depends strongly on $\lambda$ via the condition
$(\Box_g-\lambda)u\in\Hsc^{s-1,r+1}$); the point is that the relevant
estimates {\em on fixed spaces}, with {\em fixed relatively compact
  error terms} are perturbation stable. Interestingly, cf.\ the discussion of
\cite[Section~4]{Vasy:Positive}, adopting arguments of Isozaki
\cite{IsoRad} from $N$-body scattering as done in \cite[Proof of Proposition~17.8]{Vasy:Propagation-2},
which are valid after minor modification in this setting as we discuss
below, any element of the nullspace
of $\Box_g-\lambda$ in either the Feynman or the anti-Feynman spaces
in fact lies in $\dCI(M)$:

\begin{prop}
On both the Feynman and anti-Feynman function spaces the nullspace of
$P=\Box_g-\lambda$ is a subspace of $\dCI(M)$.
\end{prop}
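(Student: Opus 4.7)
I would adapt the Isozaki-type radiation condition argument from \cite[Section~4]{Vasy:Positive} and \cite[Proof of Proposition~17.8]{Vasy:Propagation-2} to the sc-setting. The starting point is that any $u$ in the nullspace of $P$ in either Fredholm space already satisfies $u\in\Hsc^{\infty,-\epsilon}$ for every $\epsilon>0$, by the remark after \eqref{eq:X-sr-refined}. Moreover, real principal type propagation together with the high-regularity radial point estimate at the radial set \emph{from} which estimates are propagated (where $r>-1/2$ already forces rapid decay) confines $\WFsc(u)$ to the radial set \emph{to} which estimates are propagated --- the sink $L_+$ in the Feynman case, with the roles of $L_\pm$ reversed in the anti-Feynman case. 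Thus the proposition reduces to upgrading the almost-$L^2_\scl$ decay of $u$ to Schwartz decay in a small neighborhood of that radial set.

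The core of the argument is a microlocal positive commutator estimate in which the commutant $A^*A$ is \emph{not} the naive weight $x^{2r}\chi(x)^2$ --- whose principal symbol computation would give the wrong sign at $L_+$ once $r\geq -1/2$ --- but rather carries an additional fiber-variable cutoff $\phi$ microlocally supported slightly \emph{off} $L_+$. By the sink structure at $L_+$, nearby bicharacteristics flow into $L_+$, so $H_p\phi$ has a definite sign on the support of $\phi$, and with the right choice of $\phi$ this term dominates the weight-derivative contribution. One then writes out the regularized pairing $\langle\imath[P,A^*A]u,u\rangle$; since $Pu=0$ and $\lambda\in\RR$ implies $P-P^*\in\Psiscc^{0,0}$, this pairing equals an error involving a regularizing family $\tilde\Lambda_t\in\Psiscc^{-\infty,-\infty}$ chosen exactly as in the proof of Lemma~\ref{lemma:trivial-nullspace}, which converges away and allows one to justify all the integrations by parts from the a priori decay $u\in\Hsc^{\infty,-\epsilon}$ alone. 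The positivity resulting from $H_p\phi$ dominating then gives $Au\in L^2_\scl$ for a commutant $A$ encoding decay beyond the radial point threshold. Bootstrapping with arbitrarily large decay order in the weight of $A$ yields $u\in\Hsc^{\infty,N}$ for every $N$, and hence $u\in\dCI(M)$.

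The main obstacle is the construction of the commutant and the verification, in a local sc-normal form at the radial set, that the Hamilton derivative of the fiber cutoff $\phi$ really does dominate the weight term under the precise source/sink dynamics; this is the technical heart of Isozaki's argument in \cite{IsoRad} and of its adaptation in \cite[Proof of Proposition~17.8]{Vasy:Propagation-2}, and porting it to the sc-framework amounts to checking that the linearization of $H_p$ transverse to $L_+$ has the right positivity, which is guaranteed by the non-degenerate sink condition built into the definition of a non-trapping sc-metric. A secondary technical issue, already met in Lemma~\ref{lemma:trivial-nullspace}, is the uniform control of the regularization errors, which the $\tilde\Lambda_t$-family handles at the stronger level $\Hsc^{s,r}\to\Hsc^{s-1-\epsilon,r+1-\epsilon}$ needed here.
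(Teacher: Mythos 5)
The high-level plan is reasonable (confine $\WFsc(u)$ to a single radial set, then improve decay there by a positive commutator estimate that exploits $Pu=0$, as in Isozaki's argument), but the mechanism you describe is not the one that makes the argument go through, and as proposed it would fail at exactly the threshold decay order $r=-1/2$.

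First, the commutant in the paper's proof is \emph{not} a pseudodifferential $A^*A$ with a fiber-variable cutoff near $L_+$; it is a pure multiplication operator $\chi_\ep(x)$, a function of the base variable $x$ alone, compactly supported in $\{x\geq\ep\}\subset M^\circ$. Its decisive property is that it is \emph{unbounded} as a family in $\ep$ (the paper shows $\sup x^l\chi_\ep\to\infty$), yet $[x^2\pa_x,\chi_\ep]=x^{-2r}\phi(x/\ep)^2$ is uniformly bounded in symbols of order $2r$. The sign of $\imath[P,\chi_\ep]$ at the relevant radial set comes from $\pa_\tau p\cdot x^2\pa_x(\chi_\ep)$, i.e.\ from the $x^2\pa_x$-component of $H_p$, which is $\pa_\tau p$ and has a definite sign at a source/sink, multiplied by the nonnegative $x^{-2r}\phi(x/\ep)^2$. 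It is not $H_p\phi$ of a cutoff microsupported off $L_+$; such a term lives in an annulus around $L_+$ and can never control the weight term \emph{at} $L_+$, which is precisely where the argument is needed.

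Second, the justification of the pairing is different. Because $\chi_\ep$ has compact support in $M^\circ$, one has $\langle\imath[P,\chi_\ep]u,u\rangle=0$ \emph{exactly} when $Pu=0$, with no regularizing family $\tilde\Lambda_t$ needed. A $\tilde\Lambda_t$-regularization as in Lemma~\ref{lemma:trivial-nullspace} cannot work here, for the reason the paper spells out in the Remark after the proof: $\chi_\ep$ is not uniformly bounded on any weighted Sobolev space, so the cross terms $\langle\chi_\ep u,Pu\rangle$, $\langle Pu,\chi_\ep u\rangle$ do not stay bounded as $\ep\to 0$ — they only vanish because $Pu$ is identically zero, not merely Schwartz. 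Your proposal does not engage with this unboundedness at all, and since the whole point of the Isozaki construction is to trade an unbounded commutant for a bounded commutator, this is the missing idea: a commutant of the form $A^*A$ with $A\in\Psiscc^{*,r}$, $r$ near $-1/2$, simply does not cross the threshold because the weight-derivative term degenerates at $r=-1/2$ and the a priori regularity $u\in\Hsc^{\infty,-1/2-\ep'}$ is not enough to make $Au$ a priori in $L^2_\scl$. Once the Isozaki step has put $u$ in $\Hsc^{\infty,r}$ for some $r>-1/2$ near $L_+$, the standard high-regularity radial point estimate does take over and gives $u\in\dCI(M)$, as you and the paper both say.

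So the gap is concrete: replace the proposed fiber cutoff $\phi$ and $\tilde\Lambda_t$-regularization with the singular base commutant $\chi_\ep(x)=\ep^{-2r-1}\int_0^{x/\ep}\phi(s)^2 s^{-2r-2}\,ds$, compactly supported in $M^\circ$, with sign supplied by $\pa_\tau p$ at the source/sink, and use $Pu=0$ to get the exact vanishing of the commutator pairing rather than trying to absorb it by regularization.
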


\begin{proof}
The arguments following \cite{IsoRad} rely on using
the commutant
$$
\chi_\ep(x)=\ep^{-2r-1}\int_0^{x/\ep}\phi(s)^2 s^{-2r-2}\,ds,
$$
where $\phi\in\CI(\RR)$ is such that $\phi=0$ on $(-\infty,1]$, $1$ on
$[2,\infty)$, and where $r\in(-1/2,0)$. Notice that $\chi_\ep$ is supported in $M^\circ$ (namely
in $x\geq\ep$) and
$$
[x^2\pa_x,\chi_\ep]=x^2\pa_x(\chi_\ep)=x^{-2r}\phi(x/\ep)^2.
$$
Thus, while the family $\{\chi_\ep:\ \ep\in(0,1)\}$ (considered as a
family of multiplication operators) is not (uniformly)
bounded in
any symbol space, as (using $s'=\ep s$) for any $l$
$$
\sup x^l\chi_\ep\geq
\chi_\ep(1)\geq \int_{2\ep}^1\phi(s'/\ep)^2 (s')^{-2r-2}\,ds'=\frac{1}{2r+1}((2\ep)^{-2r-1}-1)\to\infty
$$
as $\ep\to 0$ since $-2r-1<-2(-1/2)-1=0$, its commutator with
$x^2\pa_x$ is bounded in symbols of order $2r$. This gives, by \eqref{eq:Ham-vf}, that
the principal symbol of $\imath[P,\chi_\ep]$ is, with $p$ denoting the
principal symbol of $P$,
$$
H_p\chi_\ep=(\pa_\tau p) x^2\pa_x (\chi_\ep);
$$
note that the $x^2\pa_x$ component of $H_p$ is exactly $\pa_\tau p$,
so at a source, resp.\ sink, manifold of the boundary, $\pa_\tau p<0$,
resp.\ $\pa_\tau p>0$, i.e.\ at such a manifold this commutator has a
definite sign. While the lower order terms, which here means just the
$0$th order terms as we have a differential operator in $P$, involve
further derivatives of $\chi_\ep$, they {\em only} involve at least
first derivatives of $\chi_\ep$ and thus the lower order terms will
also be bounded in symbols of order $0,2r-1$. Thus,
\begin{equation}\label{eq:pos-comm-op}
\imath[\Box_g-\lambda,\chi_\ep]=\pm\phi(x/\ep) (B^*B+E)\phi(x/\ep)+F_\ep,
\end{equation}
where $B\in\Psiscc^{1/2,r}$, with principal symbol elliptic at the
sources/sinks (depending on the choice of $\pm$, with $+$ for sinks), $E\in\Psiscc^{1/2,r}$ having
disjoint wave front set from these, and $\{F_\ep:\ \ep\in(0,1)\}$ is
uniformly bounded in $\Psiscc^{0,2r-1}$.

Now consider $u\in \Ker P$ on
$\cX^{s',r'}$, where $s'$ may be taken arbitrarily high and $r'$
arbitrarily high except in a neighborhood of the source/sink in
accordance with the sign in $\pm$ above, where $r'\in(-1,-1/2)$
(`arbitrarily high' is in
the sense that the nullspace is {\em independent of such choices}). Then
$\langle \phi(\cdot/\ep)E\phi(\cdot/\ep)u,u\rangle$ remains bounded as
on $\WFsc'(E)$, $u$ is microlocally in $\Hsc^{\infty,\infty}=\dCI(M)$,
while $\langle F_\ep u,u\rangle$ also remains bounded since
$u\in\Hsc^{\infty,-1/2-\delta'}$ for all $\delta'>0$ and $2r-1<-1$, so
one can choose $\delta'>0$ with
$2(-1/2-\delta')-(2r-1)>0$. On the other hand, for $\ep>0$,
$$
\langle
\imath[\Box_g-\lambda,\chi_\ep]u,u\rangle=\langle \imath\chi_\ep
u,(\Box_g-\lambda) u\rangle-\langle \imath(\Box_g-\lambda)u,\chi_\ep u\rangle=0
$$
since $\chi_\ep$ is compactly supported in $M^\circ$, so the
integration by parts is justified.
Correspondingly, one deduces that
$B\phi(\cdot/\ep)u$ is uniformly bounded in $L^2_\scl$, and thus by
the standard weak-* convergence argument $Bu\in L^2_\scl$, proving that
even at the source/sink where we did not have a priori knowledge of
membership of $u$ in a subspace of $\Hsc^{\infty,-1/2}$, in fact,
$u\in\Hsc^{\infty,r}$ for all $r\in(0,-1/2)$. Then the standard radial
point estimate, see \cite{Vasy:Propagation-Notes, Vasy:Minicourse},
implies that in fact $u$ is microlocally in $\Hsc^{\infty,\infty}$
even there; in combination with the other a priori knowledge, we
conclude that $u\in \dCI(M)$.
\end{proof}

\begin{rem}
Notice that this argument used crucially that $Pu=0$; since $\chi_\ep$
is not uniformly bounded on any weighted Sobolev space, $\langle \chi_\ep
u,(\Box_g-\lambda) u\rangle$, $\langle (\Box_g-\lambda)u,\chi_\ep
u\rangle$ would not remain bounded as $\ep\to 0$ otherwise even if,
say, $Pu\in\dCI(M)$.

Also notice that the argument crucially relies that we are taking
either the Feynman or the anti-Feynman space, so the points at which
we do not have a priori decay are either all sources or all sinks,
thus there is a single definite sign in \eqref{eq:pos-comm-op},
arising for the common source, or sink, nature of them. For
other Fredholm problems, the elements of the nullspace are not
necessarily in the `trivial space', $\dCI(M)$.
\end{rem}

Thus, the absence of embedded eigenvalues
depends on a unique continuation argument at infinity, namely that the
rapid decay (infinite order vanishing) at $\pa M$ of an element of
$\Ker P$ implies its vanishing nearby.

In the case of non-trapping
Lorentzian scattering metrics (possibly long range), as in
\cite{Baskin-Vasy-Wunsch:Radiation, Baskin-Vasy-Wunsch:Long-range}, if one assumes that the there is a
boundary defining function $\rho$ of $M$ such that, say, near the past
`spherical cap' $\overline{C_-}$, $\frac{d\rho}{\rho^2}$, is timelike
(which for instance is true on perturbations of Minkowski space), for $\lambda>0$
energy estimates imply that, being an element of $\dCI(M)$, an element of this nullspace
vanishes identically at first near $\overline{C_-}$, and then the
non-trapping condition implying global hyperbolicity, see
\cite[Section~5]{Hintz-Vasy:Semilinear} in this setting for this implication, vanishes globally, so
the nullspace is indeed trivial.

An analogous conclusion holds by a Wick rotation argument, see
\cite{Gell-Redman-Haber-Vasy:Feynman}, for the Minkowski metric, as
well as pseudo-Riemannian translation invariant metrics, and again
the perturbation stability implies that the conclusion also holds for
their perturbations in the sc-category.

We finally remark that the $\lambda=0$ Fredholm problem was studied in
\cite{Gell-Redman-Haber-Vasy:Feynman}; one can also discuss the
limiting absorption principle there, under somewhat stronger
conditions than we needed here, but we defer it to future work.

\bibliographystyle{plain}
\bibliography{sm}

\end{document}